\newcommand{\scrP}{\mathcal{P}}
\newcommand{\scrL}{\mathcal{L}}
\newcommand{\scrH}{\mathcal{H}}
\newcommand{\scrO}{\mathcal{O}}
\newcommand{\PG}{\text{PG}}
\newcommand{\bbF}{\mathbb{F}}
\DeclareMathOperator\h{\mathsf{H}}
\DeclareMathOperator\sfI{\mathrm{I}}
\DeclareMathOperator\go{\mathsf{G}\mathsf{O}}
\newcommand{\dist}{\mathrm{d}}
 \numberwithin{equation}{section}
 \newtheorem{theorem}[equation]{Theorem}
 \newtheorem{cor}[equation]{Corollary}
 \newtheorem{lemma}[equation]{Lemma}
 \newtheorem{prop}[equation]{Proposition}
 \theoremstyle{definition}
 \newtheorem{definition}[equation]{Definition}
 \theoremstyle{remark}
 \newtheorem{rem}[equation]{Remark}
\title
 {Some non-existence results for distance-$j$ ovoids in small generalized polygons}
\author{Anurag Bishnoi and Ferdinand Ihringer}
\begin{document}

\maketitle

\begin{abstract}
  We give a computer-based proof for the non-existence of distance-$2$ ovoids in the dual split Cayley hexagon $\h(4)^D$. 
  Furthermore, we give upper bounds on partial distance-$2$ ovoids of $\h(q)^D$ for $q \in \{2, 4\}$.
\end{abstract}

\section{Introduction}

The study of distance-$j$ ovoids in generalized polygons was started by Thas,
who investigated the existence of distance-$2$ ovoids in generalized quadrangles and distance-$3$ ovoids in generalized hexagons (which are simply known as ovoids) \cite{Thas1981}.
The existence of distance-$j$ ovoids is related to the existence of particular perfect codes \cite{Cameron1976},
the separability of particular groups \cite{Cameron2008}, and various other topics.

The focus of this work is on distance-$2$ ovoids in the \emph{dual split Cayley hexagon} $\h(q)^D$.
While for the \emph{split Cayley hexagon} $\h(q)$ itself the existence of distance-$2$ ovoids is already known for $q=2,3,4$ \cite{DeWispelaere2004, DeWispelaere-VanMaldeghem2005, DeWispelaere2008} and the ovoids completely classified \cite[Sec.~18.3]{Pech2009}, for $\h(q)^D$ only the non-existence for $q=2$ \cite{Frohardt1994} and the existence for $q=3$ \cite{DeWispelaere2004} is known.
Note that we have $\h(q)$ isomorphic to $\h(q)^D$ if and only if $q$ is a power of $3$ \cite[Cor.~3.5.7]{vanMaldeghem1998}. 
Here we present a computer-based proof for the next open case, $\h(4)^D$.

\begin{theorem}\label{thm:non_ex_dualsplitcayley}
  The dual split Cayley hexagon $\h(q)^D$ does not possess a distance-$2$ ovoid for $q \in \{2, 4\}$.
\end{theorem}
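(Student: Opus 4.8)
My plan is to recast the existence of a distance-$2$ ovoid as a purely graph-theoretic search and then settle it by an exhaustive, symmetry-reduced computation. Recall that in a generalized hexagon of order $(q,q)$ the collinearity graph is distance-regular of valency $q(q+1)$ and diameter $3$, with exactly $q^5$ points opposite (i.e.\ at graph-distance $3$ from) any given point. A distance-$2$ ovoid is a perfect $1$-code (efficient dominating set) in this graph: a set $\mathcal O$ of points whose closed neighbourhoods partition the point set. Two points have disjoint closed neighbourhoods precisely when they are opposite, and since $(q^3+1)(q^2+q+1)=(q+1)(q^4+q^2+1)$ equals the total number of points, \emph{any} set of $q^3+1$ pairwise opposite points automatically has covering closed neighbourhoods. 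Hence a distance-$2$ ovoid of $\h(q)^D$ is exactly a set of $q^3+1$ pairwise opposite points, that is, a clique of size $q^3+1$ in the opposition graph $\Gamma_3$ (equivalently a coclique of this size in the distance-$1$-or-$2$ graph). As $q^3+1$ is the well-known upper bound for such sets, attained precisely by distance-$2$ ovoids, it suffices to prove that the largest \emph{partial} distance-$2$ ovoid has size at most $q^3$ for $q\in\{2,4\}$; this is the upper bound advertised in the abstract, and it lies strictly below $q^3+1$.

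Concretely I would first coordinatize $\h(q)$ (for instance on the parabolic quadric $Q(6,q)$, or directly through $G_2(q)$), build the point set of $\h(q)^D$ together with the opposition relation, and record the action of the automorphism group $G=\Aut(\h(q)^D)$, which contains the point-transitive group $G_2(q)$. By transitivity I may assume that a fixed point $p_0$ lies in the putative ovoid; the remaining $q^3$ points then lie among the $q^5$ points opposite $p_0$ and are pairwise opposite, so the task becomes a maximum-clique problem in the subgraph induced on these $q^5$ vertices, on which the stabiliser $G_{p_0}$ still acts. I would run a branch-and-bound clique search combining two ingredients: constraint propagation (selecting a point deletes from the candidate pool every point not opposite to it) and orbit pruning (at each node branch only over orbit representatives under the stabiliser of the partial configuration constructed so far, giving isomorph rejection). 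For $q=2$ this must reproduce the Frohardt--Johnson bound \cite{Frohardt1994} and thereby validate the implementation; the genuinely new computation is $q=4$.

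The main obstacle is the sheer size of the $q=4$ instance: the opposition graph of $\h(4)^D$ has $1365$ vertices and one is hunting for a clique of size $65$, equivalently a $64$-clique among the $1024$ points opposite a fixed point. A naive search is hopeless, so the entire argument hinges on cutting it down. I expect to attack this by fixing not merely one point but a well-chosen initial partial ovoid (a short mutually-opposite chain), splitting the computation over the $G$-orbits of such starting configurations, and pushing the stabiliser-based isomorph rejection as deep into the recursion as possible; eigenvalue or linear-programming bounds on the residual subgraphs can be used to discard subtrees that provably cannot be completed to $65$ points. To make the conclusion trustworthy I would carry out the search in at least two independent formulations (a bespoke backtracking routine cross-checked against a general clique solver, and/or an exact-cover or integer-programming model), so that the final verdict---that no clique of size $q^3+1$ exists, and hence no distance-$2$ ovoid of $\h(4)^D$ exists---does not rest on a single program.
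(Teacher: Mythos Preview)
Your proposal rests on a misidentification of the object under study. A distance-$2$ ovoid in a generalized hexagon is \emph{not} a perfect $1$-code in the collinearity graph: by Lemma~\ref{lem:exact_cover} with $j=2$, it is a set of points meeting every line in exactly one point, i.e.\ an exact transversal of the lines, and in order $(q,q)$ it has size $1+q^2+q^4$ (so $21$ for $q=2$ and $273$ for $q=4$). What you describe---a set of $q^3+1$ pairwise opposite points whose closed neighbourhoods partition the point set---is a distance-$3$ ovoid (the classical ``ovoid''), the case $j=3$ of Lemma~\ref{lem:exact_cover}. Your clique search would therefore target the wrong structure; worse, it would terminate positively, since $\h(q)^D$ does possess distance-$3$ ovoids (dually, $\h(q)$ has Hermitian spreads) for the values of $q$ in question, so the computation would find $65$-cliques rather than rule them out. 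The assertion that ``the largest partial distance-$2$ ovoid has size at most $q^3$'' is likewise incorrect and is not what the abstract promises: for $q=2$ there exist partial distance-$2$ ovoids of size $19$, far exceeding $q^3=8$.

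The paper's actual argument exploits the fact that a distance-$2$ ovoid, being a line transversal, restricts to a distance-$2$ ovoid of every subhexagon $\scrH\cong\h(q,1)$ (Lemma~\ref{lem:dual_split_Cayley}). For $q=4$ one first classifies the distance-$2$ ovoids of $\h(4,1)$---equivalently the perfect matchings of the incidence graph of $\PG(2,4)$---up to the action of the stabiliser of $\scrH$ in $\Aut(\h(4)^D)$, obtaining $350$ orbit representatives of $21$ points each; for each representative an integer linear program then certifies that no extension to a full $273$-point distance-$2$ ovoid of $\h(4)^D$ exists.
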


The proof uses a combination of various algorithmic ideas, mostly Knuth's dancing links algorithm \cite{Knuth2000}, Linton's smallest image set algorithm \cite{Linton2004} and integer linear programming.
We note that Theorem \ref{thm:non_ex_dualsplitcayley} has been used in \cite{Bishnoi2016} to prove that there does not exist any semi-finite generalized hexagon containing $\h(4)^D$ as a full subgeometry.
In fact, non-existence of distance-$2$ ovoids in any given finite generalized hexagon implies that every generalized hexagon containing the given hexagon as a full subgeometry is finite \cite[Cor.~3.7]{Bishnoi2016}.

It was shown in \cite{Offer2005} that a distance-$3$ ovoid in  a generalized octagon of order $(s, t)$ can only exist if $s = 2t$. 
This implies the non-existence of distance-$3$ ovoids of the dual Ree-Tits octagon $\go(q^2, q)$ for all $q > 2$. 
Computationally, we show that the last remaining case, $\go(4, 2)$, does not possess a distance-$3$ ovoid.
A different computation to verify this fact was already done by Brouwer \cite{Brouwer2011}.
Brouwer's result is mentioned as a remark in a liber amicorum in Dutch, which leaves out some details of the used techniques, and it is not 
connected to the result by Offer and van Maldeghem in his remark; so it seems worthwhile to restate their combined results as follows.

\begin{theorem}[Brouwer, Offer, van Maldeghem]\label{thm:non_ex_octagon}
  The dual Ree-Tits octagon $\go(q^2, q)$ does not possess a distance-$3$ ovoid for all prime powers $q$.
\end{theorem}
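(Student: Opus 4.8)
The plan is to reduce the statement to a single genuine case by an arithmetic argument and then to settle that case by computer, mirroring the structure of the surrounding text. First I would record that $\go(q^2,q)$ is a generalized octagon of order $(s,t)=(q^2,q)$ and invoke Offer's theorem \cite{Offer2005}: such an octagon can admit a distance-$3$ ovoid only if $s=2t$. Substituting $s=q^2$ and $t=q$ turns this necessary condition into $q^2=2q$, i.e. $q(q-2)=0$, whose only prime-power solution is $q=2$. Consequently, for every prime power $q\neq 2$ the order of $\go(q^2,q)$ already forbids a distance-$3$ ovoid; in particular this covers all genuine Ree-Tits octagons $\go(q^2,q)$ with $q=2^{2e+1}$, $e\geq 1$, with no computation required.

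It then remains to rule out a distance-$3$ ovoid in $\go(4,2)$, the case $q=2$, where Offer's inequality is tight ($s=4=2t$) and therefore yields no information. Here I would unwind the definition into a concrete combinatorial condition on the point-line geometry: a set $O$ of points, pairwise at distance at least $6$ in the incidence graph, with every element of the octagon at distance at most $3$ from $O$. Since $\go(4,2)$ has $(1+4)(1+8+8^2+8^3)=2925$ points and $(1+2)(1+8+8^2+8^3)=1755$ lines, a counting argument pins down the size that any such $O$ must have and recasts the search as a perfect-covering (exact-cover-type) instance. I would then apply the same computational pipeline used for Theorem \ref{thm:non_ex_dualsplitcayley}: Knuth's dancing links \cite{Knuth2000} to drive the backtracking search over partial ovoids, Linton's smallest-image-set method \cite{Linton2004} to quotient out the action of the automorphism group — the Ree group ${}^2F_4(2)$ acting on $\go(4,2)$ — so that automorphism-equivalent branches are explored only once, and integer linear programming to certify infeasibility of whatever residual subproblems survive.

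The main obstacle I expect is scale rather than ideas. The naive search on $2925$ points is hopeless, so everything hinges on exploiting the symmetry of $\go(4,2)$ hard enough that the problem becomes tractable; getting the orbit and stabiliser bookkeeping correct so that Linton's algorithm genuinely collapses isomorphic partial solutions, and structuring the dancing-links recursion so that the ILP is invoked only on a small number of irreducible cases, is where the real effort lies. Finally, to guard against implementation error in a computer-dependent non-existence proof, I would check that the outcome agrees with Brouwer's independent verification \cite{Brouwer2011}.
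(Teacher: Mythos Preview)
Your reduction via Offer's $s=2t$ condition is exactly what the paper does for $q>2$. For the remaining case $q=2$, however, the paper takes a much simpler route than you propose: it feeds the exact-hitting-set formulation coming from Lemma~\ref{lem:exact_cover} (for odd $j=3$, the condition $|\Gamma_{\leq 1}(x)\cap\mathcal{O}|=1$ for every point $x$) directly into an ILP solver as in \eqref{eq:MIP_for_ovoid}, with no preprocessing, no symmetry reduction, and no dancing links; Gurobi certifies infeasibility in about a day. Your expectation that the $2925$-point instance is ``hopeless'' without the full $\h(4)^D$ pipeline is therefore too pessimistic --- the elaborate machinery (subhexagon restriction, Linton's algorithm, case splitting) was introduced precisely because the direct ILP attack fails for $\h(4)^D$, not because it fails in general. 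Your approach would also succeed, but it front-loads substantial bookkeeping that turns out to be unnecessary here; the paper's version is both shorter to state and easier to reproduce, and it matches Brouwer's independent computation \cite{Brouwer2011}.
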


\section{Preliminaries}

\subsection{Generalized Polygons}

  A \emph{point-line geometry} is a triple $(\scrP, \scrL, \sfI)$, $\scrP$ and $\scrL$ disjoint, 
  $\sfI \subseteq \scrP \times \scrL$.
  The elements of $\scrP$ are called \emph{points}, the elements of $\scrL$ are called \emph{lines},
  the relation $\sfI$ is called \emph{incidence relation}.
  The \textit{point-line dual }of the geometry $(\scrP, \scrL, \sfI)$ is the geometry $(\scrP^D, \scrL^D, \sfI^D)$ where $\scrP^D = \scrL,  \scrL^D = \scrP$ and $(\ell, x) \in \sfI^D$ iff $(x, \ell) \in \sfI$.
    An \textit{automorphism} of a point-line geometry $(\scrP, \scrL, \sfI)$ is a bijective map $f : \scrP \cup \scrL \rightarrow \scrP \cup \scrL$ such that $f(\scrP) = \scrP$, $f(\scrL) = \scrL$ and $(x, \ell) \in \sfI$ if and only if $(f(x), f(\ell)) \in \sfI$. 
  The \emph{incidence graph} of a point-line geoemtry $(\scrP, \scrL, \sfI)$ has $\scrP \cup \scrL$ as
  its vertices and two vertices are adjacent if and only if they are incident.
  We denote the distance function in this graph by $\delta(\cdot, \cdot)$.
  The \emph{point graph} of a point line geometry $(\scrP, \scrL, \sfI)$ has $\scrP$ as
  its vertices and two vertices are adjacent if they have distance $2$ in the incidence graph, i.e., they are collinear with a common line. 
  We usually denote the point graph by $\Gamma$ and denote its distance function by $\dist(\cdot, \cdot)$.
  A point-line geometry is \textit{connected} if its incidence graph, or equivalently its point graph, is connected. 
  For a point $x$ and a line $\ell$ we define $\dist(x, \ell) := \min \{\dist(x, y) : y ~\sfI~\ell\}$.
  Similarly for two lines $\ell_1, \ell_2$ we define $\dist(\ell_1, \ell_2) = \min\{\dist(x,y) : x ~\sfI~\ell_1, y~\sfI~\ell_2\}.$
  The set of points at distance at most $i$ from a point $x$ in the point graph will be denoted by $\Gamma_{\leq i}(x)$ and the set set of points at distance at most $i$ from a line $\ell$ will be denoted by $\Gamma_{\leq i}(\ell)$.
  The following lemma relates the distance function $\delta$ to the distance function $\dist$. We leave its proof to the reader.

  \begin{lemma}
  \label{lem:dist_delta}
  Let $(\scrP, \scrL, \sfI)$ be a connected point-line geometry, let $\delta(\cdot, \cdot)$ denote distance function in its incidence graph, and let $\dist(\cdot, \cdot)$ denote the distance function in its point graph. 
  Let $x, y \in \scrP$ and $\ell, \ell' \in \scrL$ with $\ell \neq \ell'$.
  Then we have $\delta(x, y) = 2\dist(x, y)$, $\delta(x, \ell) = 2\dist(x, \ell) + 1$ and $\delta(\ell, \ell') = 2\dist(\ell, \ell') + 2$. 
  \end{lemma}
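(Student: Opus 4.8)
The plan is to exploit the fact that the incidence graph is bipartite with parts $\scrP$ and $\scrL$, so that every walk alternates between points and lines. First I would record the elementary observation underlying all three identities: a walk $x = p_0, \ell_1, p_1, \ell_2, \dots, \ell_k, p_k = y$ of length $2k$ between two points in the incidence graph carries exactly the same information as a walk $p_0, p_1, \dots, p_k$ of length $k$ in the point graph, since each consecutive pair $p_{i-1}, p_i$ is collinear via $\ell_i$ and, conversely, each edge of $\Gamma$ arises from a choice of common line. Because the incidence graph is bipartite, a geodesic between two points has even length $2k$; combined with the correspondence this gives both $\dist(x,y) \le k = \delta(x,y)/2$ (project a $\delta$-geodesic to a point-walk) and $\delta(x,y) \le 2\dist(x,y)$ (lift a $\dist$-geodesic to the incidence graph), which together yield the first identity $\delta(x,y) = 2\dist(x,y)$.

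For the second identity I would reduce to the first. Writing $\dist(x,\ell) = \dist(x,y^*)$ for some $y^* ~\sfI~\ell$ attaining the minimum, the incidence path realizing $\delta(x,y^*) = 2\dist(x,\ell)$ followed by the edge $y^* ~\sfI~\ell$ shows $\delta(x,\ell) \le 2\dist(x,\ell)+1$. In the other direction, a $\delta$-geodesic from the point $x$ to the line $\ell$ has odd length $2k+1$ by bipartiteness and ends with a point $p_k ~\sfI~\ell$ reached from $x$ by $2k$ edges; the first identity then gives $\dist(x,p_k) \le k$, whence $\dist(x,\ell) \le k$ and $2\dist(x,\ell)+1 \le \delta(x,\ell)$.

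The third identity follows the same template, now peeling an incidence edge off each end. One direction joins the edges $\ell ~\sfI~ x^*$ and $y^* ~\sfI~ \ell'$ (with $x^* ~\sfI~ \ell$ and $y^* ~\sfI~ \ell'$ attaining $\dist(\ell,\ell')$) to a point-geodesic between $x^*$ and $y^*$, giving $\delta(\ell,\ell') \le 2\dist(\ell,\ell')+2$. The other takes a $\delta$-geodesic between the distinct lines $\ell, \ell'$, which has even length $2m$ and has the form $\ell, p_1, \dots, p_m, \ell'$ with $p_1 ~\sfI~\ell$ and $p_m ~\sfI~\ell'$; its segment from $p_1$ to $p_m$ has length $2(m-1)$, so the first identity gives $\dist(p_1,p_m) \le m-1$ and hence $\dist(\ell,\ell') \le m-1$, i.e.\ $2\dist(\ell,\ell')+2 \le \delta(\ell,\ell')$.

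I expect the only real care needed is bookkeeping: tracking parities correctly in the bipartite graph and noting that the hypothesis $\ell \neq \ell'$ is exactly what makes the formula $\delta(\ell,\ell') = 2\dist(\ell,\ell')+2$ consistent, since it forces $\delta(\ell,\ell') \ge 2$ (distinct lines are non-adjacent) while the case of two lines through a common point correctly matches $\dist(\ell,\ell')=0$. There is no substantive obstacle here; the entire content is the walk correspondence of the first paragraph, with the latter two identities obtained merely by attaching one or two incidence edges.
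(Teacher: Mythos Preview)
Your argument is correct and is precisely the routine bipartite bookkeeping one would expect; the paper itself omits the proof entirely, leaving it to the reader, so there is nothing to compare against. Your handling of the parity, the walk correspondence, and the edge case $\ell\neq\ell'$ is all in order.
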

  
  A \emph{generalized $n$-gon} ($n \geq 2$) of order $(s, t)$ is a point-line geometry $(\scrP, \scrL, \sfI)$, $\scrP$ non-empty, such that
  \begin{enumerate}[(a)]
   \item each $\ell \in \scrL$ is incident with $s+1$ elements of $\scrP$,
   \item each $x \in \scrP$ is incident with $t+1$ elements of $\scrL$,
   \item the incidence graph has diameter $n$ and the maximum possible girth, $2n$.
  \end{enumerate}

By a famous result of Feit and Higman \cite{Feit-Higman}, generalized $n$-gons of order $(s, t)$ with $s, t > 1$ (the \textit{thick} case) exist only for $n \in \{ 2, 3, 4, 6, 8 \}$. 
 For $n = 2$ we have a geometry $(\scrP, \scrL, \sfI)$ where $\sfI = \scrP \times \scrL$ and for $n = 3$ we have a finite projective plane. 
Generalized $n$-gons for $n = 4$, $6$ and $8$ are referred to as generalized \textit{quadrangles}, \textit{hexagons} and \textit{octagons}, respectively. 
By an easy counting, the number of points in a generalized hexagon of order $(s, t)$ is $(1 + s)(1 + st + s^2 t^2)$ and the number of points in a generalized octagon of order $(s, t)$ is $(1 + s)(1 + st + s^2t^2 + s^3t^3)$. 
From the axioms of a generalized polygon it follows that the point-line dual of a generalized polygon of order $(s, t)$ is a generalized polygon of order $(t, s)$. 

For $n = 2d$, axiom (c) in the definition of generalized $n$-gons can be replaced by the following set of axioms on the point graph of the geometry \cite[Sec. 1.9.4]{DeBruyn2006_book}: 
\begin{enumerate}[(1)]
\item For every line $\ell$ and every point $x$ there exists a unique point $x'$ on $\ell$ such that $\dist(x, y) = \dist(x, x') + 1$ for all $y \neq x'$ on $\ell$. 
\item For every two points $x$, $y$ with $\dist(x, y) = i < d$ there exists a unique neighbour of $y$ in the point graph which is at distance $i - 1$ from $x$. 
\end{enumerate}

We denote the \emph{Desarguesian projective plane} over $\bbF_q$ by $\PG(2, q)$.
Then $\h(q, 1)$ denotes the generalized hexagon of order $(q, 1)$ whose points are the incident point-line pairs of $\PG(2, q)$, lines are the points and lines of $\PG(2, q)$, and incidence is reverse containment.

Let $\ell$ be a $2$-dimensional subspace of $\bbF_q^n$, where $q$ a prime power and $n \geq 2$.
Let $x  = (x_1, \dots, x_n), y = (y_1, \dots, y_n)$ be a basis of $\ell$. Then the Grassmann coordinates of $\ell$
are $(x_iy_j - x_jy_i)_{1 \leq i < j \leq n}$. Notice that the Grassmann coordinates are 
independent of the choice of $x$ and $y$, up to scalar multiplication.
The \emph{dual split Cayley hexagon} $\h(q)^D$ is a generalized hexagon of order $(q, q)$ and can be defined as follows \cite[Chap. 2]{Tits1959, vanMaldeghem1998}.
Define the quadratic form $Q: \bbF_q^7 \rightarrow \bbF$ with $Q(x) = x_0x_4 + x_1x_5 + x_2x_6 - x_3^2$.
\begin{enumerate}[(a)]
 \item The \textit{lines} of $\h(q)^D$ all $1$-dimensional subspaces of $\bbF_q^7$, which vanish on $Q$.
 \item The \textit{points} of $\h(q)^D$ are all $2$-dimensional subspaces of $\bbF_q^7$, which vanish on $Q$
	and whose Grassmann coordinates satisfy $p_{12} = p_{34}$, $p_{54} = p_{32}$, $p_{20} = p_{35}$,
	$p_{65} = p_{30}$, $p_{01} = p_{36}$ and $p_{46} = p_{31}$.
 \item Incidence is reverse containment. 
\end{enumerate}

Let $q = p^r$, where $p$ is a prime and $r$ is a positive integer. 
Then the automorphism group of $\h(q, 1)$ is isomorphic to $\mathrm{P\Gamma L}_3(q) \rtimes C_2$ and thus it has size $2r(q^3 - 1)(q^3 - q)(q^3  - q^2)/(q - 1)$.
The automorphism group of $\h(q)$ is isomorphic to $\mathrm{G}_2(q) \rtimes \mathrm{Aut}(\mathbb{F}_q)$ and thus it has size $rq^6(q^6 - 1)(q^2 - 1)$. 
The following is a well known result on the relationship between these generalized hexagons. 

\begin{lemma}[{\cite[Cor. 1.8.6]{vanMaldeghem1998}}]
\label{lem:subhexagon}
  The dual split Cayley hexagon $\h(q)^D$ contains a subhexagon $\scrH$ of order $(q, 1)$ ismorphic to $\h(q, 1)$.
  Moreover, for every pair of lines $\ell_1, \ell_2  \in \h(q)^D$ which are at distance $6$ from each other in the incidence graph there is a unique $\h(q, 1)$-subhexagon of $\h(q)^D$ which contains both $\ell_1$ and $\ell_2$. 
\end{lemma}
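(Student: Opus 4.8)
The plan is to prove the two assertions separately: first exhibit one explicit $\h(q,1)$-subhexagon by coordinates, and then promote this to the full statement using the flag-transitivity of $\mathrm{G}_2(q)$ together with a double count.

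For existence I would exploit the block structure of the defining form. Writing $\bbF_q^7 = U \oplus \langle e_3\rangle \oplus U^{*}$ with $U = \langle e_0,e_1,e_2\rangle$ and $U^{*} = \langle e_4,e_5,e_6\rangle$, the form $Q(x) = x_0x_4 + x_1x_5 + x_2x_6 - x_3^2$ restricts to the natural pairing between $U$ and $U^{*}$ and kills $U$ and $U^{*}$ individually. Hence \emph{every} $1$-space of $U$ and every $1$-space of $U^{*}$ is isotropic, i.e.\ is a line of $\h(q)^D$; I would take these $2(q^2+q+1)$ lines as the lines of $\scrH$, viewing the $1$-spaces of $U$ as the points and the $1$-spaces of $U^{*}$ as the lines of a copy of $\PG(2,q)$. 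A short computation shows that a $2$-space $\langle u,\phi\rangle$ with $0\neq u\in U$, $0\neq \phi\in U^{*}$ is totally isotropic if and only if $u$ and $\phi$ are paired to $0$, that is, exactly when the point $\langle u\rangle$ lies on the line $\langle\phi\rangle$ of $\PG(2,q)$. Taking these isotropic $2$-spaces as the points of $\scrH$, incidence by reverse containment makes $\scrH$ the flag geometry (double) of $\PG(2,q)$: each such point lies on exactly the two lines $\langle u\rangle$ and $\langle\phi\rangle$ (so $t=1$), while $\langle u\rangle$ lies on the $q+1$ points coming from the lines of $\PG(2,q)$ through $\langle u\rangle$ (so $s=q$). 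This is precisely $\h(q,1)$, and since $s=q$ agrees with the order of $\h(q)^D$ the embedding is full. It remains to check that these $2$-spaces really satisfy the six Grassmann conditions defining the points of $\h(q)^D$; I would verify this for one flag directly and then invoke $\mathrm{SL}_3(q)$-equivariance, using that the natural-plus-contragredient action of $\mathrm{SL}_3(q)$ on $U\oplus\langle e_3\rangle\oplus U^{*}$ preserves $Q$ and embeds $\mathrm{SL}_3(q)$ as an $A_2$-subsystem subgroup of $\mathrm{G}_2(q)=\Aut(\h(q))$, whence it stabilises the point set of $\h(q)^D$.

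For the full statement I would pass to the action of $G=\mathrm{G}_2(q)$. Since $\h(q)$ is a Moufang hexagon, $G$ is transitive on ordered pairs of opposite lines of $\h(q)^D$ (those at incidence-distance $6$, equivalently with $\dist(\ell_1,\ell_2)=2$). The standard $\scrH$ contains such a pair, because in the double of $\PG(2,q)$ the opposite lines are exactly the non-incident point–line pairs of $\PG(2,q)$, and full subhexagons are isometrically embedded, so these remain opposite in $\h(q)^D$. Transitivity then gives \emph{existence} of a subhexagon through an arbitrary opposite pair. For \emph{uniqueness} I would count incidences between opposite line-pairs and subhexagons. From the distance distribution of a generalized hexagon of order $(q,q)$ one gets $N = (1+q)(1+q^2+q^4)\,q^5$ ordered opposite line-pairs in $\h(q)^D$, and from that of order $(q,1)$ one gets $k = 2q^2(q^2+q+1)$ ordered opposite line-pairs inside a single $\scrH$. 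If the $\h(q,1)$-subhexagons of $\h(q)^D$ form one $G$-orbit with stabiliser the maximal subgroup $\mathrm{SL}_3(q){:}2$, their number is $M = |G|/|\mathrm{SL}_3(q){:}2| = q^3(q^3+1)/2$, and a direct simplification gives the identity
\[
  M\,k \;=\; q^5(q^3+1)(q^2+q+1) \;=\; (1+q)(1+q^2+q^4)\,q^5 \;=\; N .
\]
Hence the average number of subhexagons through an opposite pair is $Mk/N = 1$; combined with the existence bound $\geq 1$ this forces exactly one, proving uniqueness.

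The routine parts are the Grassmann check and the distance counts $N$ and $k$. The genuine obstacle is the group theory underlying the count: that $G$ is transitive on opposite line-pairs, that the relevant subhexagons constitute a \emph{single} $G$-orbit with stabiliser $\mathrm{SL}_3(q){:}2$, and that full subhexagons embed isometrically. The single-orbit claim is the delicate point, since $\mathrm{G}_2(q)$ carries two classes of $A_2$-subgroups; the second class must be shown to stabilise $\h(q,1)$-subhexagons of the \emph{dual} hexagon $\h(q)$ rather than of $\h(q)^D$, so that only one class contributes here. This is exactly where the coincidence $\h(q)\cong\h(q)^D \Leftrightarrow q=3^r$ enters, and I would settle it by appealing to the classification of maximal subgroups of $\mathrm{G}_2(q)$.
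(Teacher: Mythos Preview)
The paper does not supply a proof of this lemma at all; it simply quotes it as \cite[Cor.~1.8.6]{vanMaldeghem1998}. In fact the paper runs the logic in the opposite direction from you: it \emph{assumes} the uniqueness statement and then obtains the subhexagon count $q^3(1+q)(q^2-q+1)/2$ by the double count of Corollary~\ref{cor:subhexagon}. Your proposal instead computes that number first, via $|\mathrm{G}_2(q)|/|\mathrm{SL}_3(q){:}2|$, and feeds it back into the averaging identity $Mk=N$ to deduce uniqueness. That is a perfectly legitimate alternative route, and your explicit $U\oplus\langle e_3\rangle\oplus U^{*}$ construction for existence is correct (the six Grassmann relations do vanish identically on $\langle u,\phi\rangle$).

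The one place where your outline is genuinely thinner than it looks is the single--orbit claim. Your double count only shows that each opposite pair lies in exactly one subhexagon \emph{from the orbit of your explicit $\scrH$}; to upgrade this to uniqueness among \emph{all} $\h(q,1)$-subhexagons you still have to rule out a second orbit. Appealing to the list of maximal subgroups of $\mathrm{G}_2(q)$ is not quite enough on its own: you would also need that the stabiliser of an arbitrary full $(q,1)$-subhexagon contains a conjugate of $\mathrm{SL}_3(q)$, and that step---``automorphisms of the subhexagon extend to the ambient hexagon''---is not automatic. Van~Maldeghem's proof avoids this by a direct geometric argument: in $\h(q)$ every point is \emph{span-regular}, so two opposite points determine a canonical ideal subhexagon of order $(1,q)$ built from their traces, and dualising gives exactly the statement here. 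If you want to keep your counting approach self-contained, the cleanest patch is to observe that any full $(q,1)$-subhexagon contains an opposite line pair, hence (by your $Mk=N$ identity) shares such a pair with a subhexagon in the standard orbit, and then argue geometrically that two full $(q,1)$-subhexagons sharing an opposite line pair must coincide.
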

\begin{cor}
\label{cor:subhexagon}
The number of subhexagons of $\h(q)^D$ that are isomorphic to $\h(q, 1)$ is equal to $q^3(1+q)(q^2 - q + 1)/2$.
\end{cor}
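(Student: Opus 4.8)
The plan is to count in two ways the \emph{opposite pairs} of $\h(q)^D$, that is, the unordered pairs of lines at distance $6$ in the incidence graph. Write $N$ for the number we seek and let $P$ denote the total number of opposite pairs of lines in $\h(q)^D$. By Lemma~\ref{lem:subhexagon}, every opposite pair $\{\ell_1,\ell_2\}$ lies in exactly one subhexagon $\scrH\cong\h(q,1)$, and conversely an opposite pair is accounted for at $\scrH$ precisely when both of its lines belong to $\scrH$. Hence, letting $M$ be the number of opposite pairs contained in a single subhexagon (the same value for each, since all such subhexagons are isomorphic to $\h(q,1)$), a count of the incident pairs (opposite pair, containing subhexagon) gives $P=N\cdot M$, so that $N=P/M$. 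It remains to evaluate $P$ and $M$.

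For $P$, note that $\h(q)^D$ has order $(q,q)$ and therefore $(1+q)(1+q^2+q^4)$ lines. Fixing a line $\ell$ and running the standard breadth-first count in the incidence graph, which is tree-like up to distance $5$ because the girth is $12$, the lines at distance $0,2,4$ from $\ell$ number $1+(q+1)q+(q+1)q^3=1+q+q^2+q^3+q^4$; subtracting from the total shows that $\ell$ has exactly $(1+q)(1+q^2+q^4)-(1+q+q^2+q^3+q^4)=q^5$ opposite lines. Summing over all lines and dividing by $2$ yields $P=\tfrac12(1+q)(1+q^2+q^4)\,q^5$. For $M$, the subhexagon $\scrH$ has order $(q,1)$ and hence $2(q^2+q+1)$ lines, and the identical count inside $\scrH$ gives $2(q^2+q+1)-(1+(q+1)+(q+1)q)=q^2$ opposite lines per line of $\scrH$, so $M=\tfrac12\cdot 2(q^2+q+1)\cdot q^2=(q^2+q+1)q^2$. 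Using the factorization $1+q^2+q^4=(q^2+q+1)(q^2-q+1)$, we then obtain
\[
N=\frac{P}{M}=\frac{\tfrac12(1+q)(q^2+q+1)(q^2-q+1)\,q^5}{(q^2+q+1)\,q^2}=\frac{q^3(1+q)(q^2-q+1)}{2},
\]
as claimed.

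The one point genuinely demanding care is the step equating the number of opposite pairs of $\h(q)^D$ contained in $\scrH$ with the number of opposite pairs computed \emph{intrinsically} in $\scrH$. These agree only if $\scrH$ is isometrically embedded, i.e.\ two lines of $\scrH$ are at distance $6$ in $\h(q)^D$ exactly when they are at distance $6$ within $\scrH$. This is where the proof really uses that $\scrH$ is a \emph{full} subhexagon, and it is the step I would justify most carefully, by invoking the distance-preserving embedding of full subhexagons (so that geodesics between elements of $\scrH$ stay inside $\scrH$); the remaining counts are then routine.
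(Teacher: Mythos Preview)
Your proof is correct and is essentially the same double count as the paper's: both count incidences between pairs of opposite lines and the subhexagons containing them, with the paper using ordered pairs $(\ell_1,\ell_2,\scrH)$ and you using unordered ones, which is immaterial. Your explicit remark on why distances in $\scrH$ agree with distances in $\h(q)^D$ is a point the paper leaves implicit.
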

\begin{proof}
Let $\delta(\cdot, \cdot)$ denote the distance function in the incidence graph of $\h(q)^D$. Double count the triples $(\ell_1, \ell_2, \scrH)$ where $\ell_2, \ell_2$ are two lines of $\h(q)^D$ with $\delta(\ell_1, \ell_2) = 6$ and $\scrH$ is a subhexagon isomorphic to $\h(q, 1)$ that contains both $\ell_1$ and $\ell_2$. 
There are in total $(1+q)(1 + q^2 + q^4)$  lines in $\h(q)^D$ and $q^5$ lines at distance $6$ from a fixed line. 
Therefore, there are $q^5(1 + q)(1+q^2 + q^4)$ such triples. 
There are in total $2(1+q+q^2)$ lines in $\h(q, 1)$ and $q^2$ lines at distance $6$ from a fixed line. 
Thus, if $k$ is the total number of subhexagons isomorphic to $\h(q, 1)$, then we have $kq^2(2 + 2q + 2q^2) = q^5(1 + q)(1 + q^2 + q^4)$, which gives us $k = q^3(1+q)(q^2 - q + 1)/2$. 
\end{proof}

The \emph{dual Ree-Tits octagon} $\go(q^2, q)$, $q$ an odd power of $2$, 
is a generalized octagon of order $(q^2, q)$ and its definition can be seen in \cite{Tits1983} or \cite{Coolsaet2005}.

\subsection{Ovoids and Associated Algorithms}

The usual definition of a distance-$j$ ovoid, $j \geq 1$, of a generalized polygon is the following \cite{Offer2005}.

\begin{definition}
  Let $\mathcal{S} = (\scrP, \scrL, \sfI)$ be a generalized $2d$-gon and let $2 \leq j \leq d$.
  \begin{enumerate}[(a)]
    \item A \emph{partial distance-$j$ ovoid} of $\mathcal{S}$ is a set of points $\scrO$ such that all elements of $\scrO$ have distance at least $2j$ (in the incidence graph) from each other. 
    \item A \emph{distance-$j$ ovoid} of $\mathcal{S}$ is a partial distance-$j$ ovoid $\scrO$ such that every element of $\scrP \cup \scrL$ has distance at most $j$ from at least one element of $\scrO$.
  \end{enumerate}
\end{definition}

As a consequence of Lemma \ref{lem:dist_delta} we have the following equivalent definition \cite[Sec.~3.5]{DeBruyn2005_valuations} in terms of the point-graph which we will use in this paper. 

\begin{definition}
Let $\mathcal{S} = (\scrP, \scrL, \sfI)$ be a generalized polygon and let $\dist(\cdot, \cdot)$ denote the distance function in the point graph of $\mathcal{S}$. 
\begin{enumerate}[(a)]
\item A \emph{partial distance-$j$ ovoid} of $\mathcal{S}$ is a set of points $\scrO$ such that for every two distinct points $x$ and $y$ we have $\dist(x, y) \geq j$. 

\item A \emph{distance-$j$ ovoid} of $\mathcal{S}$ is a partial distance-$j$ ovoid $\scrO$ such that (1) for every point $a$ of $\mathcal{S}$ there exists a point $x$ of $\mathcal{O}$ such that $\dist(a, x) \leq j/2$; (2) for every line $\ell$ of $\mathcal{S}$ there exists a point $x \in \mathcal{O}$ such that $\dist(\ell, x) \leq (j - 1)/2$. 
\end{enumerate}
\end{definition}

\begin{lemma}\footnote{One side of this Lemma is proved in \cite[Lem. 2]{DeBruyn2006_ovoids}.}
\label{lem:exact_cover}
Let $\mathcal{S} = (\mathcal{P}, \mathcal{L}, \mathsf{I})$ be a generalized $2d$-gon. 
For any $i \in \{0, \dots, d\}$ and an element $a \in \mathcal{P} \cup \mathcal{L}$, let $\Gamma_{\leq i}(a)$ denote the set of points at distance at most $i$ from $a$ in the point graph of $\mathcal{S}$.
Let $\mathcal{O}$ be a set of points and $j \in \{2, \dots, d\}$. 
Then 
\begin{enumerate}[$(1)$]
\item for $j$ even, $\mathcal{O}$ is a distance-$j$ ovoid if and only if for all $\ell \in \mathcal{L}$ we have $|\Gamma_{\leq (j - 2)/2}(\ell) \cap \mathcal{O}| = 1$. 

\item for $j$ odd, $\mathcal{O}$ is a distance-$j$ ovoid if and only if for all $x \in \mathcal{P}$ we have $|\Gamma_{\leq (j - 1)/2}(x) \cap \mathcal{O}|  = 1$. 
\end{enumerate}
\end{lemma}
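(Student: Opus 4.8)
The plan is to split each of the two claimed biconditionals into a \emph{packing} inequality (``$\leq 1$'') and a \emph{covering} inequality (``$\geq 1$''), and to identify the packing inequality with the partial-ovoid condition $\dist(x,y) \geq j$ and the covering inequality with the two covering axioms (1)--(2) in the definition of a distance-$j$ ovoid. The only geometric facts I use are that $\dist$ is a metric on the point graph, that $\dist(p,q) \leq 1$ whenever $p,q$ lie on a common line, that $\dist(x,\ell) = \min\{\dist(x,p) : p~\sfI~\ell\} \leq \dist(x,p)$ for any $p$ on $\ell$, and that $\dist$ takes only integer values (so the half-integer radii in the definition round down). Throughout I write $j = 2m$ in the even case and $j = 2m+1$ in the odd case.

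I would first dispose of the covering inequalities, which are the easy half. For $j$ even, axiom (2) reads $\dist(\ell, x) \leq (j-1)/2 = m - \tfrac{1}{2}$, i.e.\ $\dist(\ell,x) \leq m-1$, which is literally $|\Gamma_{\leq m-1}(\ell) \cap \mathcal{O}| \geq 1$; and a single application of the triangle inequality---choose any line through a given point $a$ and feed it to axiom (2)---shows axiom (1) is then automatic. Hence for $j$ even the pair (1),(2) collapses to $|\Gamma_{\leq m-1}(\ell) \cap \mathcal{O}| \geq 1$ for all $\ell$. Dually, for $j$ odd axiom (1) reads $\dist(a,x) \leq m$, i.e.\ $|\Gamma_{\leq m}(a) \cap \mathcal{O}| \geq 1$, and axiom (2) follows from it by the same one-step estimate, so (1),(2) collapse to $|\Gamma_{\leq m}(a) \cap \mathcal{O}| \geq 1$ for all points $a$.

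The substance lies in matching the packing inequality to the partial-ovoid condition. One direction is immediate: if distinct $x,y \in \mathcal{O}$ both lie in $\Gamma_{\leq m-1}(\ell)$ (even $j$), then routing through $\ell$ gives $\dist(x,y) \leq (m-1)+1+(m-1) = 2m-1 < j$, and if both lie in $\Gamma_{\leq m}(a)$ (odd $j$), then $\dist(x,y) \leq 2m < j$; either way the partial-ovoid property fails. For the converse I start from distinct $x,y \in \mathcal{O}$ with $\dist(x,y) = k \leq j-1$, fix a geodesic $x = x_0, x_1, \dots, x_k = y$ in the point graph, and read off a witness from its centre: if $k$ is even I use the central vertex $x_{k/2}$ (taking it as the point for odd $j$, or any line through it for even $j$), and if $k$ is odd I use the central edge $\{x_{(k-1)/2}, x_{(k+1)/2}\}$ (taking its common line for even $j$, or one of its endpoints for odd $j$). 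In each case a short parity check against $k \leq j-1$ shows that $x$ and $y$ both fall within the prescribed radius of the chosen centre, producing two points of $\mathcal{O}$ in a single neighbourhood.

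Assembling ``$\geq 1$'' with ``$\leq 1$'' yields ``$= 1$'', which finishes both equivalences. I expect the only delicate point to be the parity bookkeeping in the geodesic-centre step: the radii $(j-2)/2$ for lines and $(j-1)/2$ for points are exactly calibrated so that a path of length $j-1$ is halved to land inside them, and confirming that the even/odd split of $k$ meshes with the line/point dichotomy---rather than the triangle-inequality estimates, which are routine---is where care is needed.
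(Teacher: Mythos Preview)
Your argument is correct and follows essentially the same route as the paper's proof: both use the triangle inequality through a line to pass from the partial-ovoid condition to the ``$\leq 1$'' bound, both locate a midpoint line/point on a geodesic for the converse, and both derive the redundant covering axiom from the other via a one-step estimate. Your packing/covering factoring and the explicit even/odd split on the geodesic length $k$ are a bit more detailed than the paper's sketch (which only writes out the even case and asserts the existence of a suitable line on the path without the case analysis), but the mathematical content is the same.
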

\begin{proof}
We only prove the first case, when $j$ is even, and note that the second part has a similar proof. 
Say $\mathcal{O}$ is a distance-$j$ ovoid and let $\ell \in \mathcal{L}$. 
Then by the definition of distance-$j$ ovoids there exists a point $x$ in $\mathcal{O}$ such that $\dist(x, \ell) \leq (j - 1)/2$, but since $j$ is even and distances are integral we have $\dist(x, \ell) \leq (j - 2)/2$. 
Say there was another point $y \neq x$ in $\mathcal{O}$ with $\dist(y, \ell) \leq (j - 2)/2$. 
Then $\dist(x, y) \leq \dist(x, \ell) + \dist(y, \ell) + 1 = j - 1$ which is a contradiction. 
Now say $\mathcal{O}$ is a set of points such that for every line $\ell$ we have $|\Gamma_{\leq (j - 2)/2}(\ell) \cap \mathcal{O}| = 1$. 
Let $x, y$ be two distinct points in $\mathcal{O}$. 
If $\dist(x, y) \leq j - 1$, then there exits a line $\ell$ in the path joining $x$ to $y$ such $\dist(x, \ell) \leq (j -2)/2$ and $\dist(y, \ell) \leq (j - 2)/2$, which is not possible. 
Now let $x$ be an arbitrary point of $\mathcal{S}$. 
Let $\ell$ be any line through $x$, and let $y$ be the unique point in $\mathcal{O}$ such that $\dist(\ell, y) \leq (j - 2)/2$. 
Then $\dist(x, y) \leq 1 + \dist(\ell, y) = j/2$. 
Let $\ell$ be an arbitrary line of $\mathcal{S}$, then by the assumption on $\mathcal{O}$ there exists a point in $\mathcal{O}$ at distance at most $(j - 2)/2 \leq (j - 1)/2$ from $\ell$. 
Therefore, $\mathcal{O}$ is a distance-$j$ ovoid. 
\end{proof}

The \textit{exact cover} problem in a hypergraph $(V, E)$ asks for the existence of a subset $S$ of $E$ such that for every vertex $v$ there exists a unique edge $e$ in $S$ which contains $v$. 
The dual of this problem is the \textit{exact hitting set} problem where we need to find a subset $O$ of $V$ such that for every edge $e$ there is a unique vertex $v$ in $O$ which is contained in $E$. 
Lemma \ref{lem:exact_cover} makes it clear that the existence of a distance-$j$ ovoid in a generalized $2d$-gon $\mathcal{S}$ is equivalent to existence of an \textit{exact hitting set} in a hypergraph derived from the point graph of $\mathcal{S}$. 
For $j$ even the edges of this hypergraph are the subsets $\Gamma_{\leq (j-2)/2}(\ell)$ of $\mathcal{P}$ where $\ell$ is a line, and for $j$ odd the edges of this hypergraph are the subsets $\Gamma_{\leq (j - 1)/2}(x)$ where $x$ is a point. 
This makes it possible to use Knuth's \emph{dancing links algorithm for exact covers} \cite{Knuth2000} to find all distance-$j$ ovoids. 
Note that the exact cover problem is NP-hard. 

A second technique which is available for the exact cover problems is the use of integer linear programming solvers.
We will use it in the following way.
Let $\mathcal{S} = (\mathcal{P}, \mathcal{L}, \mathrm{I})$ be a generalized $2d$-gon. 
Let $\scrO'$ be a possibly empty set of points which forms a partial distance-$j$ ovoid, i.e., every pair of points in $\scrO'$ are at distance at least $j$ in the point graph. 
Let $H = (V, E)$ be the hypergraph as defined above, with $V = \mathcal{P}$ and 
\[E = 
\begin{cases}
\{\Gamma_{\leq (j - 2)/2}(\ell) : \ell \in \mathcal{L}\} \text{ if } j \text { is even}\\
\{\Gamma_{\leq (j - 1)/2}(p) : p \in \mathcal{P}\} \text{ if } j \text{ is odd}.
\end{cases}
\]
For each $p \in \scrP$ let $X_{p} \in \{ 0, 1\}$ be a binary variable.
Then the equations
\begin{align}
  X_p = 1 && \text{ for all } p \in \scrO' \notag\\
  \sum_{p \in e} X_p = 1 && \text{ for all } e \in E \label{eq:MIP_for_ovoid}
\end{align}
have an integer solution if and only if $\mathcal{S}$ possesses a distance-$j$ ovoid that contains $\scrO'$.
Similarly, the equations
\begin{align}
  X_p = 1 && \text{ for all } p \in \scrO' \notag\\
  \sum_{p \in e} X_p \leq 1 && \text{ for all } e \in E \label{eq:MIP_for_partil_ovoid}
\end{align}
have an integer solution is and only if $\mathcal{S}$ possesses a partial distance-$j$ ovoid that contains $\scrO'$.

Any of these formulations can be directly used to prove Theorem \ref{thm:non_ex_dualsplitcayley} for $q = 2$ and Theorem \ref{thm:non_ex_octagon}. 
We have verified this using Gurobi\footnote{The running time was about one day with Gurobi Optimizer version 6.5.0 build v6.5.0rc1 (linux64) with an Intel Core i5-3550 CPU @ 3.30GHz processor}. 
As noted before, non-existence of distance-$3$ ovoids in $\go(q^2, q)$ for $q > 2$ is covered in \cite{Offer2005}
and the case $q=2$ was already mentioned in \cite{Brouwer2011}.

\section{Distance-$2$ Ovoids in $\h(4)^D$}

\begin{lemma}
\label{lem:dual_split_Cayley}
  Let $\scrH$ be a hexagon of order $(s, t)$.
  Let $\scrH'$ be a subhexagon of order $(s, t')$ of $\scrH$ and
  let $\scrO$ be a distance-$2$ ovoid of $\scrH$. Then
  $\scrH' \cap \scrO$ is a distance-$2$ ovoid of $\scrH'$ and
  \begin{align*}
    |\scrO \cap \scrH'| = s^2t'^2 + st' + 1.
  \end{align*}
\end{lemma}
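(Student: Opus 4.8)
The plan is to reduce both assertions to the transversal characterization of distance-$2$ ovoids supplied by Lemma \ref{lem:exact_cover}. Taking $d = 3$ and $j = 2$ there, one has $(j-2)/2 = 0$, and $\Gamma_{\leq 0}(\ell)$ is exactly the set of points incident with $\ell$; hence a set of points is a distance-$2$ ovoid of a generalized hexagon if and only if it meets every line in precisely one point. So I will verify this one-point condition for $\scrO \cap \scrH'$ against the lines of $\scrH'$, and then read off the cardinality by a double count.

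The key structural input is that $\scrH'$ is a \emph{full} subhexagon. Since $\scrH'$ and $\scrH$ share the same first parameter $s$, every line $\ell$ of $\scrH'$ is incident with $s+1$ points of $\scrH'$ and, being also a line of $\scrH$, with $s+1$ points of $\scrH$; as the former points form a subset of the latter and the two sets have equal size $s+1$, they coincide. Thus every point of $\scrH$ lying on a line of $\scrH'$ already belongs to $\scrH'$. Now fix a line $\ell$ of $\scrH'$: because $\scrO$ is a distance-$2$ ovoid of $\scrH$, there is a unique point $x \in \scrO$ incident with $\ell$, and by fullness $x \in \scrH'$, so $x \in \scrO \cap \scrH'$ and it is plainly the only point of $\scrO \cap \scrH'$ on $\ell$. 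Hence $\scrO \cap \scrH'$ meets every line of $\scrH'$ exactly once, and Lemma \ref{lem:exact_cover} applied in the converse direction shows that $\scrO \cap \scrH'$ is a distance-$2$ ovoid of $\scrH'$.

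For the cardinality I would double count the incident pairs $(x, \ell)$ with $x \in \scrO \cap \scrH'$ and $\ell$ a line of $\scrH'$ through $x$. Each $x \in \scrO \cap \scrH'$ lies on $t'+1$ lines of $\scrH'$, and two distinct points of $\scrO \cap \scrH'$ never share a line (otherwise that line would meet $\scrO \cap \scrH'$ twice), so the number of such pairs equals $|\scrO \cap \scrH'|\,(t'+1)$. On the other hand each line of $\scrH'$ contributes exactly one such pair, so the total equals the number of lines of $\scrH'$. Since $\scrH'$ has $(1+s)(1 + st' + s^2 t'^2)$ points, each on $t'+1$ lines, while each line carries $s+1$ points, the standard incidence count gives $(1 + st' + s^2 t'^2)(t'+1)$ lines in $\scrH'$. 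Equating the two expressions and cancelling $t'+1$ yields $|\scrO \cap \scrH'| = s^2 t'^2 + st' + 1$.

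The only genuinely delicate point is the fullness step, namely that a subhexagon sharing the parameter $s$ inherits all of the points on each of its lines; everything else is a direct application of Lemma \ref{lem:exact_cover} and the elementary counting for generalized hexagons. I expect no obstacle beyond making the fullness claim precise, which is in fact forced by the equality of $s$ in $\scrH$ and $\scrH'$.
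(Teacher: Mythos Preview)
Your proof is correct and follows essentially the same approach as the paper: both invoke Lemma~\ref{lem:exact_cover} to reduce to the one-point-per-line characterization, then double count to obtain the size. Your treatment is in fact more careful than the paper's, which asserts ``if each line of $\scrH$ meets $\scrO$ in exactly one point, then the same is true for $\scrH'$'' without justifying that the unique intersection point actually lies in $\scrH'$; your fullness argument (that equal first parameter $s$ forces every line of $\scrH'$ to carry the same point set in both hexagons) is exactly what is needed to close that gap.
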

\begin{proof}
By Lemma \ref{lem:exact_cover}, $\mathcal{O}$ is a distance-$2$ ovoid if and only if it meets every line in a unique point.  
 If each line of $\scrH$ meets $\scrO$ in exactly $1$ points, then the same is true for $\scrH'$.
 Moreover, the number of points in a generalized hexagon of order $(s, t)$ is $(1 + s)(1 + st + s^2t^2)$, and thus by double counting, the number of points in a distance-$2$ ovoid is $(1 + st + s^2t^2)$. 
 Therefore, we have $|\scrO \cap \scrH'| = s^2t^2 + st' + 1$. 
\end{proof}

While both Knuth's dancing links algorithm and integer programming solvers fail to directly determine the existence distance $2$-ovoids in $\h(4)^D$ which has $1365$ points and $1365$ lines, in view of Lemmas \ref{lem:subhexagon} and \ref{lem:dual_split_Cayley} we can use the following idea: 
\emph{first classify all distance-$2$ ovoids in $\h(4, 1)$ up to isomorphism under the action of the stabilizer of $\h(4, 1)$, and then see if any of these ovoids can be extended to a distance-$2$ ovoid of $\h(4)^D$}. 

\bigskip \noindent
We note that the stabilizer of a subgeometry of $\h(4)^D$ which is isomorphic to $\h(4, 1)$, under the action of the automorphism group of $\h(4)^D$ is in fact isomorphic to the automorphism group of $\h(4, 1)$.
As the point graph of $\h(q, 1)^D$ corresponds to the incidence graph of the projective plane $\PG(2, q)$, a distance-$2$ ovoid in $\h(q, 1)$ corresponds to a perfect matching of the incidence graph of $\PG(2, q)$. 
It is folklore that the number of perfect matchings in a balanced bipartite graph corresponds to the permanent of the biadjacency matrix of that graph (see for example \cite{Plummer2015}).
It is easy to verify the following by calculating the corresponding permanent.

\begin{lemma}[{\cite[A000794]{OEIS}}]\label{lem:permanent_h41}
  The number of perfect matchings in the incidence graph of $\mathrm{PG}(2, 4)$ is $18534400$.
\end{lemma}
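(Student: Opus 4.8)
The plan is to turn the statement into a single finite computation. By the folklore fact just recalled, the number of perfect matchings of the (balanced, bipartite) incidence graph of $\PG(2,4)$ equals the permanent of its point--line incidence matrix, so it suffices to write down that matrix explicitly and evaluate its permanent.

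First I would realise $\PG(2,4)$ concretely over $\bbF_4$: its $21$ points are the $1$-dimensional subspaces of $\bbF_4^3$ and its $21$ lines are the $2$-dimensional subspaces, each of which is the kernel of a linear functional represented by a normal vector that is unique up to scalar. Choosing representatives $v_1,\dots,v_{21}$ for the points and $w_1,\dots,w_{21}$ for the lines, I set $A_{ij}=1$ precisely when the point $\langle v_i\rangle$ lies on the line $\langle w_j\rangle^{\perp}$, i.e.\ when $w_j\cdot v_i = 0$ in $\bbF_4$, and $A_{ij}=0$ otherwise. Since each point of $\PG(2,4)$ lies on $5$ lines and each line carries $5$ points, $A$ is a $5$-regular $0/1$ matrix of size $21$.

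The key step is then to compute $\mathrm{per}(A)$. As $A$ is only $21 \times 21$, Ryser's inclusion--exclusion formula evaluates the permanent in $O(2^{21}\cdot 21)$ integer operations, which is entirely routine, and it returns $18534400$. To guard against a transcription error in the incidence relation or a bug in the permanent routine, I would (i) cross-check the output against the value recorded in the OEIS entry A000794 cited in the statement, and (ii) run the identical construction and permanent evaluation on the smaller planes $\PG(2,2)$ and $\PG(2,3)$, whose matching counts can be verified independently (the Fano-plane case even by hand) and which appear as earlier terms of the same sequence.

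I do not anticipate any genuine mathematical obstacle here: the permanent of a fixed small $0/1$ matrix is a well-defined integer, and the only real hazards are clerical---specifying incidence in $\PG(2,4)$ correctly and using exact integer (not floating-point) arithmetic so that the eight-digit answer is produced without rounding error. A computer-free, purely combinatorial derivation of this count would be considerably harder, since permanents of incidence matrices admit no general closed form, and I would not pursue one.
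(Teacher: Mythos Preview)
Your proposal is correct and is precisely what the paper does: the paper states only that the lemma is ``easy to verify \ldots by calculating the corresponding permanent,'' and your plan simply makes that permanent computation explicit via Ryser's formula on the $21\times 21$ point--line incidence matrix of $\PG(2,4)$. No different ideas are involved.
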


Notice that a perfect matching is an exact cover, and so we can use Knuth's dancing links algorithm to enumerate all perfect matchings in a bipartite graph.

\begin{prop}\label{prop:class_h4d_ovoids}
  Let $G$ be the automorphism group of $\h(4)^D$.
  Let $\scrH$ be a subhexagon of $\h(4)^D$ ismormorphic to $\h(4, 1)$.
  Then there are exactly $350$ non-isomorphic distance-$2$ ovoids in $\scrH$ with respect to $G_{\scrH}$, the stabilizer of $\scrH$ under the action of $G$.
\end{prop}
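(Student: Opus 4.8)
The plan is to reduce the classification to an explicit orbit computation on the set of perfect matchings of the incidence graph of $\PG(2,4)$. As noted above, the point graph of $\scrH \cong \h(4,1)$ is, via point-line duality, the incidence graph of $\PG(2,4)$, so a distance-$2$ ovoid of $\scrH$ is exactly a perfect matching of that incidence graph, and by Lemma~\ref{lem:permanent_h41} there are precisely $18534400$ of them. Since a perfect matching is an exact cover of the $42$ vertices by the $105$ edges (the flags of $\PG(2,4)$, which are the points of $\scrH$), I would first run Knuth's dancing links algorithm to enumerate all $18534400$ ovoids explicitly, each recorded as a $21$-subset of the $105$ points of $\scrH$. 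Producing this list and verifying that its size matches Lemma~\ref{lem:permanent_h41} serves as a built-in consistency check on the implementation.

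Next I would fix the acting group. By the remark preceding the statement, $G_{\scrH}$ is isomorphic to $\Aut(\h(4,1)) \cong \mathrm{P\Gamma L}_3(4) \rtimes C_2$, which has order $241920$; concretely it acts as a permutation group on the $105$ points of $\scrH$, and two distance-$2$ ovoids are isomorphic with respect to $G_{\scrH}$ precisely when they lie in a common $G_{\scrH}$-orbit of $21$-subsets. Thus the proposition is the assertion that this action has exactly $350$ orbits on the $18534400$ ovoids.

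The remaining task is to count these orbits and extract one representative from each, and this is where the main difficulty lies: naively sweeping over $18.5$ million sets while applying $2.4 \times 10^5$ group elements is far too expensive. Instead I would compute, for each ovoid $\scrO$, a canonical representative of its orbit using Linton's smallest image set algorithm, which returns the lexicographically least image of $\scrO$ under $G_{\scrH}$ by exploiting a base and strong generating set so that the overwhelming majority of group elements are pruned. Two ovoids then lie in the same orbit if and only if their canonical images coincide, so I would canonicalize every ovoid, deduplicate, and count the distinct canonical forms. The decisive step, and the expected bottleneck, is making this canonicalization fast enough to run on all $18534400$ inputs; with Linton's pruning this is feasible, and I expect the computation to return exactly $350$ distinct canonical forms, together with an explicit orbit representative for each, which is precisely what is needed for the subsequent extension argument in $\h(4)^D$.
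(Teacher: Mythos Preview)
Your proposal is correct and follows essentially the same approach as the paper: enumerate the distance-$2$ ovoids of $\scrH$ via dancing links and canonicalize each under $G_{\scrH}$ with Linton's smallest-image-set algorithm, then count the distinct canonical forms. The only refinement in the paper is an early-termination trick---a counter is initialized to $18534400$ and decremented by the orbit length whenever a new canonical form is found, so the loop halts as soon as the counter hits zero rather than canonicalizing all $18.5$ million ovoids.
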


We used a computer to prove Proposition \ref{prop:class_h4d_ovoids}.
The following algorithm was able to classify all $350$ in a few minutes at the time of writing.\footnote{Running time: 28m37.576s with Sage Version 6.4.1 with a Intel Core i5-2400 CPU @ 3.10 GHz processor. We have to point out that Knuth's dancing link algorithm is partially randomized, the running times might vary for many reasons. 
A different model of the hexagon with the same hardware and the same Sage version has an average running time of circa 120 minutes. The same model with a different Sage version on a slower processor has a average running time of circa 15 minutes.}
We rely on Linton's algorithm \texttt{SmallestImageSet(H, S)}, which returns the lexicographically
smallest element in the orbit of a set \texttt{S} under the action of a group \texttt{H} \cite{Linton2004}.

\label{alg:class_ovoids_h_4_1}
\begin{algorithmic}
  \STATE Let $i$ be an iterator on all distance-$2$ ovoids of $\h(4, 1)$.
  \STATE $b \leftarrow 18534400$
  \STATE $L \leftarrow \{ \}$
  \WHILE{$b > 0$}
    \STATE $m \leftarrow i.\text{next}$
    \STATE $m \leftarrow $ \texttt{SmallestImageSet($H$, $m$)}
    \IF{$m \notin L$}
      \STATE $L \leftarrow L \cup \{ m \}$
      \STATE $s \leftarrow $ the orbit length of $m$ under $G_{\scrH}$
      \STATE $b \leftarrow b - s$
    \ENDIF
  \ENDWHILE
\end{algorithmic}

After running the algorithm, $L$ contains all distance-$2$ ovoids of $\h(4, 1)$.
We used the implementation of Dancing Links in SAGE \cite{sage} \footnote{\url{http://www.sagenb.org/src/combinat/matrices/dlxcpp.py}} to find the iterator and the implementation of \texttt{SmallestImageSet} in the GRAPE \cite{grape} package of GAP \cite{GAP4} to find the representatives of these $350$ isomorphism classes of distance-$2$ ovoids. We provide a more explicit description of these $350$ distance-$2$ ovoids at the end of this section.
We provide a list of all non-isomorphic 350 distance-$2$ ovoids and our full code online.\footnote{\url{http://math.ihringer.org/data.php}}
For each distance-$2$ ovoid $\scrO'$ of $\scrH$ we can define a integer linear optimization problem (ILP)
as in \eqref{eq:MIP_for_ovoid}. Then the ILP solvers easily shows that these equations are infeasible for all of the $350$ cases.%
\footnote{We verified this with CPLEX (several versions), Gurobi Optimizer (several versions) and the constraint solver Minion. The 350 ILPs in 350 files in the LP format 
took 540.3 seconds with Gurobi Optimizer version 6.5.0 build v6.5.0rc1 (linux64) with an Intel Core i5-3550 CPU @ 3.30GHz processor. Minion's running times were similar.} 
This proves Theorem \ref{thm:non_ex_dualsplitcayley}.

\begin{rem}
  For the next open case, $\h(5)^D$, our algorithmic approach fails for several reasons:
  \begin{enumerate}[(a)]
   \item The incidence graph of $\PG(2, 5)$ has $4598378639550$ perfect matchings while the automorphism group of $\PG(2, 5)$ has size $744000$.
   So a classification of all non-isomorphic distance-$2$ ovoids of $\h(5, 1)$ seems to be out of reach.
   \item Even for one given distance-$2$ ovoid of $\h(5, 1)$, the corresponding integer linear program takes too long to solve with state-of-the-art ILP solver.
  \end{enumerate}
\end{rem}

One can use the same methods to obtain bounds on partial distance-$2$ ovoids.

\begin{lemma}\label{lem:bnd_without_full_subhex}
  Let $\scrO$ be a partial distance-$2$ ovoid of $\h(q)^D$. Suppose that no subhexagon $\scrH$ of $\h(q)^D$
  isomorphic to $\h(q, 1)$ contains $q^2+q+1$ points of $\scrO$. Then $|\scrO| \leq (q^2-q+1) (q^2+q)$
\end{lemma}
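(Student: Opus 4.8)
The plan is to count, via a double-counting argument, how many points of $\scrO$ can lie in the union of all $\h(q,1)$-subhexagons, and to show that the hypothesis forces every subhexagon to miss a significant fraction of the distance-$2$ ovoid that a genuine ovoid would impose on it.

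The plan is to set up a double count on incident pairs $(x, \scrH)$ where $x \in \scrO$ is a point and $\scrH$ is an $\h(q,1)$-subhexagon containing $x$. First I would recall from Lemma~\ref{lem:dual_split_Cayley} (with $t' = 1$) that a genuine distance-$2$ ovoid of $\h(q)^D$ would meet each such subhexagon in exactly $s^2 t'^2 + s t' + 1 = q^2 + q + 1$ points, and that the intersection $\scrO \cap \scrH$ is always a \emph{partial} distance-$2$ ovoid of $\scrH$, hence has size at most $q^2 + q + 1$ (the size of a full distance-$2$ ovoid of a hexagon of order $(q,1)$, which exists since it corresponds to a perfect matching of the incidence graph of $\PG(2,q)$). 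The hypothesis says no subhexagon attains this maximum, so by integrality $|\scrO \cap \scrH| \leq q^2 + q$ for every $\scrH$.

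Next I would count the number of subhexagons through a fixed point $x \in \scrO$. From Corollary~\ref{cor:subhexagon} the total number of $\h(q,1)$-subhexagons is $q^3(1+q)(q^2 - q + 1)/2$; by the transitivity of the automorphism group of $\h(q)^D$ on points, each point lies in the same number $N$ of subhexagons, and $N$ is obtained by multiplying the subhexagon count by the number of points per subhexagon, namely $(1+q)(1 + q + q^2)$ for order $(q,1)$, and dividing by the total number of points $(1+q)(1 + q^2 + q^4)$ of $\h(q)^D$. With the two counts in hand, double counting the incident pairs gives
\begin{align*}
  |\scrO| \cdot N = \sum_{\scrH} |\scrO \cap \scrH| \leq (q^2 + q) \cdot \frac{q^3(1+q)(q^2 - q + 1)}{2},
\end{align*}
and dividing through by $N$ should collapse, after the arithmetic simplifies, to the asserted bound $|\scrO| \leq (q^2 - q + 1)(q^2 + q)$.

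The main obstacle I anticipate is purely the bookkeeping of the counting constant $N$: I must verify that $\h(q)^D$ is point-transitive (which follows from the structure of its automorphism group $\mathrm{G}_2(q) \rtimes \mathrm{Aut}(\bbF_q)$) so that every point lies in equally many subhexagons, and then confirm that the ratio of the subhexagon count to $N$ cleans up exactly to the stated coefficient rather than leaving a stray factor. An alternative and possibly cleaner route is to count incident pairs $(\ell, \scrH)$ of lines and subhexagons rather than points and subhexagons, tying directly into the line-based exact-cover characterization of Lemma~\ref{lem:exact_cover}; I would carry out whichever version makes the cancellation transparent, and the only real care needed is ensuring the final simplification is valid for both target values $q \in \{2,4\}$ and indeed for all prime powers $q$.
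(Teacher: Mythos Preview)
Your proposal is correct and follows essentially the same double-counting argument as the paper: count incident pairs $(p,\scrH)$ with $p\in\scrO$ and $\scrH$ an $\h(q,1)$-subhexagon, use the hypothesis to bound each $|\scrO\cap\scrH|\leq q^2+q$, and divide by the number $N=q^3(1+q)/2$ of subhexagons through a fixed point. The only cosmetic difference is that the paper obtains $N$ via a direct counting argument analogous to Corollary~\ref{cor:subhexagon} rather than invoking point-transitivity; either route gives the same constant and the arithmetic collapses exactly as you anticipate.
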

\begin{proof}
  Let $\scrP$ be the  set of points of $\h(q)^D$.
 We double count $(p, \scrH)$, where
  $\scrH$ a subhexagon of $\h(q)^D$ isomorphic to $\h(q, 1)$ and $p \in \mathcal{O} \cap \scrH$.
From a counting argument similar to the one in the proof of Corollary \ref{cor:subhexagon}, we see that each point is contained in $(1+q)q^3/2$ subhexagons isomorphic to $\h(q, 1)$ which tells us that there are $|\mathcal{O}|(1+q)q^3/2$ such pairs. 
 Again by Corollary \ref{cor:subhexagon}, there are $q^3(1+q)(q^2 - q + 1)/2$  subhexagons of $\h(q)^D$ which are isomorphic to $\h(q, 1)$. 
  Under the condition $|\scrO \cap \scrH| \leq q^2+q$ this yields
    $|\scrO| \leq (q^2-q+1) \cdot (q^2+q)$.
\end{proof}

For $q = 2$, Lemma \ref{lem:bnd_without_full_subhex} gives us $|\mathcal{O}| \leq 18$ and for $q = 4$ it gives us $|\mathcal{O}| \leq 260$ under the given assumptions.
To prove that the bounds given by Lemma \ref{lem:bnd_without_full_subhex} hold \textit{for all} partial distance-$2$ ovoids of $\h(q)^D$, $q \in \{2, 4\}$, we can use the following computational approach. 
If the ILP defined in \eqref{eq:MIP_for_partil_ovoid} does not have a solution larger than some integer $b \geq (q^2-q+1)(q^2+q)$ for all of the $350$ non-isomorphic distance-$2$ ovoids of $\h(q, 1)$, then we obtain $b$ as an upper bound on the size of a partial distance-$2$ ovoids.
We are able to obtain the following results using this approach.

\begin{lemma}
  A partial distance-$2$ ovoid $\scrO$ of $\h(q)^D$ satisfies the following:
  \begin{enumerate}[$(a)$]
   \item $|\scrO| \leq 19$ for $q=2$. 
   \item $|\scrO| \leq 265$ for $q=4$.
  \end{enumerate}
\end{lemma}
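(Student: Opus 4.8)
The plan is to prove both bounds by a single dichotomy, keyed to whether some $\h(q,1)$-subhexagon already carries a \emph{full} distance-$2$ ovoid of $\scrO$, and to dispatch the two cases via Lemma~\ref{lem:bnd_without_full_subhex} and a finite integer-linear-programming computation, respectively. First I would record the relevant facts about traces. A partial distance-$2$ ovoid $\scrO$ of $\h(q)^D$ meets every line in at most one point, and since a full subhexagon $\scrH \cong \h(q,1)$ is isometrically embedded, the trace $\scrO \cap \scrH$ is a partial distance-$2$ ovoid of $\scrH$. As recalled above, distance-$2$ ovoids of $\h(q,1)$ correspond to perfect matchings of the incidence graph of $\PG(2,q)$ and have exactly $q^2+q+1$ points; moreover $q^2+q+1$ is the maximum size of a partial distance-$2$ ovoid of $\h(q,1)$ (such a partial ovoid is a set of flags no two of which share a $\PG(2,q)$-point or line, i.e.\ a partial matching). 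Hence $|\scrO \cap \scrH| \leq q^2+q+1$, with equality exactly when $\scrO \cap \scrH$ is a full distance-$2$ ovoid of $\scrH$.

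\textbf{Case 1.} If no subhexagon $\scrH \cong \h(q,1)$ attains $|\scrO \cap \scrH| = q^2+q+1$, then the hypothesis of Lemma~\ref{lem:bnd_without_full_subhex} holds and yields $|\scrO| \leq (q^2-q+1)(q^2+q)$, namely $|\scrO| \leq 18$ for $q=2$ and $|\scrO| \leq 260$ for $q=4$, both within the claimed bounds.

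\textbf{Case 2.} If some subhexagon $\scrH_0 \cong \h(q,1)$ has $\scrO \cap \scrH_0$ a full distance-$2$ ovoid, I would reduce to finitely many fixed configurations. Using that $G = \Aut(\h(q)^D)$ is transitive on the $\h(q,1)$-subhexagons (the standing assumption behind Corollary~\ref{cor:subhexagon} and the identification $G_\scrH \cong \Aut(\h(q,1))$) together with the classification in Proposition~\ref{prop:class_h4d_ovoids}, fix the reference subhexagon $\scrH$ and let $\scrO'_1,\dots,\scrO'_N$ be representatives of the $G_\scrH$-orbits of distance-$2$ ovoids of $\scrH$ (so $N=350$ for $q=4$, and an analogous shorter list for $q=2$). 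Choosing $g \in G$ with $g(\scrH_0)=\scrH$ and then $h \in G_\scrH$ with $hg(\scrO \cap \scrH_0)=\scrO'_k$, the image $hg(\scrO)$ is a partial distance-$2$ ovoid containing $\scrO'_k$ with $|hg(\scrO)|=|\scrO|$. Therefore $|\scrO|$ is bounded by the maximum of $\sum_p X_p$ over feasible solutions of the integer program~\eqref{eq:MIP_for_partil_ovoid} with $\scrO'=\scrO'_k$, and it suffices to solve these $N$ maximization problems and take the largest objective. Combining the two cases, $|\scrO|$ is at most the maximum of $(q^2-q+1)(q^2+q)$ and the largest Case~2 objective; the solver returns $19$ for $q=2$ and $265$ for $q=4$.

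The main obstacle is computational tractability. For $q=4$ one must solve $350$ integer programs on the $1365$ points of $\h(4)^D$; here fixing the $q^2+q+1=21$ variables $X_p=1$ with $p \in \scrO'_k$ (and thereby forcing every collinear variable to $0$) prunes each instance enough to make it feasible, which is precisely why classifying the sub-ovoids first is essential. For $q=2$ the hexagon has only $63$ points, so one may instead solve a single integer program maximizing $\sum_p X_p$ subject to $\sum_{p \in \ell} X_p \leq 1$ for every line $\ell$, returning $19$ directly and bypassing the case split. The secondary points to verify with care are the structural inputs to the reduction: that full subhexagons are isometrically embedded (so traces of partial ovoids stay partial ovoids), that automorphisms preserve partial distance-$2$ ovoids and their cardinalities, and that $G$ acts transitively on the $\h(q,1)$-subhexagons, so that the $G_\scrH$-classification of Proposition~\ref{prop:class_h4d_ovoids} genuinely exhausts all configurations up to symmetry.
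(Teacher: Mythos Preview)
Your proposal is correct and follows essentially the same approach as the paper: split into the case where no $\h(q,1)$-subhexagon carries a full distance-$2$ ovoid (handled by Lemma~\ref{lem:bnd_without_full_subhex}) and the case where one does (handled by running the ILP~\eqref{eq:MIP_for_partil_ovoid} over the finitely many $G_\scrH$-classes of distance-$2$ ovoids of $\scrH$). One small quibble: transitivity of $\Aut(\h(q)^D)$ on the $\h(q,1)$-subhexagons is not what underlies Corollary~\ref{cor:subhexagon} (that is pure double counting from Lemma~\ref{lem:subhexagon}); the transitivity you need is a separate known fact, so cite it accordingly.
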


In fact, one can easily construct a partial distance-$2$ ovoid of size $19$ in $\h(2)^D$ using a computer. 
So the bound for $\h(2)^D$ is sharp. 
With Lemma \ref{lem:bnd_without_full_subhex} the bound we obtain for $\h(4)^D$ is $q^4+q=260$. 
We suspect that
this is the true bound, but testing one of the $350$ partial distance-$2$ ovoids takes about 2 days with our methods,
so we end up with an unreasonable running time of $2$ years.\footnote{The bound $265$ takes circa 
one week with our methods, which is more reasonable to verify.}

We conclude this work by giving a more explicit description of the $350$ perfect matchings of $\PG(2, 4)$.
We provide the structure description of the stabilizers of these ovoids provided by GAP, the lengths of point orbits in $\h(4, 1)$ 
and the lengths of line orbits in $\h(4, 1)$.

\begin{center}
\begin{tabular}{lllll}
Stabilizer Size & Number & Structure & Point Orbit Lengths & Line Orbit Lengths\\ \midrule
126 & 1 & $(C_3 \times{} (C_7 : C_3)) : C_2$ & $42^1 21^1 14^2 7^2$ & $14^3$\\
84 & 4 & $S_3 \times{} D_{14}$ & $28^1 14^4 7^3$ & $28^1 14^1$\\
54 & 1 & $((C_3 \times{} C_3) : C_3) : C_2$ & $18^4 9^1 3^8$ & $18^1 6^4$\\
42 & 4 & $C_3 \times{} D_{14}$ & $14^3 7^9$ & $14^3$\\
36 & 2 & $S_3 \times{} S_3$ & $12^3 6^{10} 3^1 2^2 1^2$ & $12^1 6^4 2^3$\\
18 & 14 & $C_3 \times{} S_3$ & $6^{13} 3^7 2^2 1^2$ & $6^6 2^3$\\
18 & 2 & $C_3 \times{} S_3$ & $6^{16} 3^1 2^2 1^2$ & $6^6 2^3$\\
12 & 14 & $D_{12}$ & $4^{19} 2^{13} 1^3$ & $4^7 2^7$\\
9 & 3 & $C_3 \times{} C_3$ & $3^{33} 1^6$ & $3^{12} 1^6$\\
6 & 2 & $S_3$ & $2^{42} 1^{21}$ & $2^{14} 1^{14}$\\
6 & 43 & $S_3$ & $2^{50} 1^5$ & $2^{21}$\\
6 & 121 & $C_6$ & $2^{48} 1^9$ & $2^{21}$\\
3 & 139 & $C_3$ & $1^{105}$ & $1^{42}$
\end{tabular}
\end{center}

\section{Conclusion}

As the case $\h(5)^D$ is computationally out of reach, 
the next goal should be to replace the computational parts of our proof for $\h(4)^D$ with algebraic arguments.
The investigation of the structure of the $350$ distance-$2$ ovoids of $\h(q, 1)$ shows that it might not be feasible to
describe the these distance-$2$ ovoids explicitly.
Maybe the specific structure of a distance-$2$ ovoid of $\h(q, 1)$ is far less important than the 
fact that all subhexagons of $\h(q)^D$ isomorphic to $\h(q, 1)$ meet a distance-$2$ ovoid in exactly $q^2+q+1$ points.

\bibliographystyle{plain}

\end{document}